\newtheorem{thm}{Theorem}[section]
\newtheorem{defn}[thm]{Definition} % def non può essere usato!
\newtheorem{lem}[thm]{Lemma}
\newtheorem{rem}[thm]{Remark}
\title[optimal set for the quantitative isoperimetric ratio]{A note on existence of an optimal set for a Bonnesen type quantitative isoperimetric ratio in the plane}
\author[Bove]{Silvio Bove}
\author[Croce]{Gisella Croce}
\author[Pisante]{Giovanni Pisante}
\address[S. Bove]{Dipartimento di Matematica e Fisica, Universit\`a degli Studi della Campania "Luigi Vanvitelli", Viale Lincoln 5, 81100 Caserta, Italy}
\email{silvio.bove@studenti.unicampania.it}
\address[G. Croce]{Normandie Univ, UNIHAVRE, LMAH, FR-CNRS-3335, 76600 Le Havre, France}
\email{gisella.croce@univ-lehavre.fr}
\address[G. Pisante]{Dipartimento di Matematica e Fisica, Universit\`a degli Studi della Campania "Luigi Vanvitelli", Viale Lincoln 5, 81100 Caserta, Italy}
\email{giovanni.pisante@unicampania.it}
\begin{document}

\maketitle

\begin{abstract}
In this note we prove the existence of a set $E_0\subset\mathbb{R}^2$, different from a ball, which minimizes, among the convex sets that satisfy a suitable interior cone condition, the ratio \begin{equation}
\label{eq:0}
\frac{D(E)}{\lambda_\mathcal{H}^2(E)},
\end{equation} where $D$ is the
isoperimetric deficit and $\lambda_\mathcal{H}$ the deviation from the spherical shape of a set $E\subset \mathbb{R}^2$.
\end{abstract}

\section{Introduction}
A few years after the Hurwitz proof of the isoperimetric inequality in the plane \cite{zbMATH02662735}, Bernstein in \cite{Bernstein1905} and later Bonnesen in \cite{Bonnesen1924} studied the quantitative problem for planar convex sets. The case of convex sets in any dimension was settled much later by Fuglede in \cite{article} who proved in particular that if $E\subset\mathbb{R}^n$ is a convex set with the same volume of the unit ball $B_1$, then, up to a translation, the Hausdorff distance
from $E$ to $B_1$ is controlled by a suitable power of the difference $P(E)-P(B_1)$. It has be proved in \cite{10.2307/40345432} that for any set of finite perimeter $E\subset\mathbb{R}^n$ with finite measure, it holds
\begin{equation}
\label{eq:1}
\lambda(E)\leq C D(E)^{\frac{1}{2}}
\end{equation}
where $\lambda(E)$ is the so-called Fraenkel asymmetry of $E$ defined as $$\lambda(E)=\min\left\{\frac{|E\Delta B_r(x)|}{r^n}:x\in \mathbb{R}^n,|E|=\omega_nr^n\right\},$$
$C$ depends only on $n$ and $D(E)$ stands for the isoperimetric deficit $$D(E)=\frac{P(E)-P(B_r)}{P(B_r)},$$ where $B_r(x)$ is the ball with radius $r>0$ and center $x$ and $B_r=B_r(0)$.
It has be proved in \cite[Theorem 1.1]{bianchini:hal-01181104} the existence of a set $E_0$ which minimizes the shape functional $$\mathcal{F}(E)=\frac{D(E)}{\lambda^2(E)}$$ among all the subsets of $\mathbb{R}^2$ (the ball excluded).

As in \cite{FUSCO2012616} we introduce a class of sets satisfying an interior cone condition. Given $x\in \mathbb{R}^n$, $R>0$, $\theta\in (0,\pi)$ and $\nu\in \mathbb{S}^{n-1}$, the spherical sector with vertex in $x$, axis of symmetry parallel to $\nu$, radius $R$ and aperture $\theta$ is defined as $$S^{\theta,R}_{x,\nu}=\left\{y\in \mathbb{R}^n:|y-x|<R,\langle y-x,\nu\rangle>\cos(\theta/2)|y-x|\right\}.$$

\begin{defn}
We say that a closed set $E\subseteq\mathbb{R}^n$ satisfies the interior cone condition at the boundary with radius $R>0$ and aperture $\theta$ if for any $x\in \partial E$ there exists $\nu_x\in \mathbb{S}^{n-1}$ such that $S^{\theta,R}_{x,\nu_x}\subset E$.
\end{defn}

\begin{defn} 
Given $R>0$, we denote by $\mathcal{C}_R$ the family of closed sets $E\subseteq\mathbb{R}^n$, with $|E|<\infty$, satisfying the interior cone condition at the boundary with radius $R|E|^{\frac{1}{n}}\omega_n^{-\frac{1}{n}}$ and aperture $\pi/2$. 
\end{defn}
We define the deviation from the spherical shape of a set $E\subset\mathbb{R}^n$ with finite measure as $$\lambda_\mathcal{H}(E)=\min_{z\in \mathbb{R}^n}\left\{\frac{d_\mathcal{H}(E,B_r (z))}{r}:|E|=\omega_n r^n\right\},$$ where $$d_\mathcal{H}(E,B_r(z))=\max\left\{\max_{x\in B_r(z)}dist(x,E),\max_{y\in E}dist(y,B_r(z))\right\}$$ and $B_r(z)$ is the closed ball centered in $z$ with radius $r$.

Let $E_n\subseteq\mathbb{R}^2$ be a sequence of sets and let $E_0\subseteq\mathbb{R}^2$. We say that the sequence $E_n$ converges in the sense of Hausdorff to $E_0$ if
$$d_\mathcal{H}(E_n,E)\rightarrow 0\qquad \textit{when}\ n\to\infty.$$
We will denote this convergence by $E_n\xrightarrow{\mathcal{H}} E_0$.

In \cite{FUSCO2012616} it was proved the following theorem:
\begin{thm}
\label{teo_1.1}
For any $R>0$ there exists $0<\delta_R<1$ and a costant $C=C(R,n)$ depending only on $R$ and $n$ such that, for any $E\in \mathcal{C}_R$ with $D(E)<\delta_R$,
\begin{equation}
\label{eq:3}
\lambda_\mathcal{H}(E)\leq C\begin{cases}D(E)^{\frac{1}{2}}\qquad &\mbox{for}\ n=2\\D(E)^{\frac{1}{2}}(\log \frac{1}{D(E)})^{\frac{1}{2}}\qquad &\mbox{for}\ n=3\\D(E)^{\frac{1}{n-1}}\qquad &\mbox{for}\ n\geq 4.   \end{cases}
\end{equation}
\end{thm}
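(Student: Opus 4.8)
The plan is to reduce the statement to a quantitative estimate for a \emph{nearly spherical} set and then to extract the $L^\infty$ (hence Hausdorff) information from an $H^1$ (perimeter) estimate by a Sobolev embedding, the dimension entering precisely through this embedding on $\mathbb{S}^{n-1}$.

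First I would fix the comparison ball. Since $D(E)<\delta_R$ is small, the sharp quantitative inequality \eqref{eq:1} gives $\lambda(E)\le C\,D(E)^{1/2}$, so $E$ is $L^1$-close to some ball $B_r(z)$ of the same volume. The interior cone condition (with radius comparable to $r$ and aperture $\pi/2$) then upgrades this $L^1$-closeness: it forbids thin spikes and deep narrow dents, so that for $\delta_R$ small enough $\partial E$ is a Lipschitz radial graph over $\mathbb{S}^{n-1}$,
\[
\partial E=\{(1+u(\sigma))\,r\,\sigma:\ \sigma\in\mathbb{S}^{n-1}\},
\]
with $\|u\|_{L^\infty}$ small and, crucially, a \emph{uniform} Lipschitz bound $\|\nabla_\sigma u\|_{L^\infty}\le C(R)$ coming directly from the fixed cone aperture. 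I would also use the freedom in the center $z$ to normalise $u$ (orthogonality to the constants and to the first spherical harmonics, i.e.\ the volume/translation normalisation). Since $\lambda_\mathcal{H}(E)$ is comparable to $\|u\|_{L^\infty}$, the whole theorem reduces to bounding $\|u\|_{L^\infty}$.

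The second ingredient is a Fuglede-type lower bound for the deficit by the Dirichlet energy of the graph: under the above normalisation one expects
\[
D(E)\ge c\big(\|u\|_{L^2(\mathbb{S}^{n-1})}^2+\|\nabla_\sigma u\|_{L^2(\mathbb{S}^{n-1})}^2\big)=c\,\|u\|_{H^1(\mathbb{S}^{n-1})}^2,
\]
so in particular $\|\nabla_\sigma u\|_{L^2}\le C\,D(E)^{1/2}$. Combining this $L^2$-control of the gradient with the uniform Lipschitz bound and embedding into $L^\infty$ on the $(n-1)$-dimensional sphere produces the three regimes. For $n=2$ the embedding $H^1(\mathbb{S}^1)\hookrightarrow L^\infty$ gives $\|u\|_{L^\infty}\le C\|u\|_{H^1}\le C\,D^{1/2}$. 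For $n\ge 4$ the scale-invariant interpolation
\[
\|u\|_{L^\infty}\le C\,\|\nabla_\sigma u\|_{L^2}^{\,2/(n-1)}\,\|\nabla_\sigma u\|_{L^\infty}^{\,(n-3)/(n-1)},
\]
together with $\|\nabla_\sigma u\|_{L^\infty}\le C(R)$, gives $\|u\|_{L^\infty}\le C\,D^{1/(n-1)}$. Finally $n=3$ is exactly the endpoint of this interpolation, where the borderline failure of $\dot H^1\hookrightarrow L^\infty$ on a $2$-manifold forces a Brezis--Gallouet type logarithmic correction, yielding the factor $(\log\tfrac1D)^{1/2}$.

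The main obstacle is the first step in low regularity. I expect the technical heart to be proving that the cone condition turns the $L^1$-smallness coming from \eqref{eq:1} into a genuine Lipschitz radial-graph representation with small height, and then running Fuglede's estimate when $\nabla_\sigma u$ is merely \emph{bounded} (of size $O(1)$, dictated by the fixed aperture) rather than small, as in the classical statement. Once this is in place, the dimension-dependent exponents—including the logarithm at $n=3$ and the change of behaviour at $n=4$—are automatic, being precisely the signature of interpolating $\dot H^1$ against $W^{1,\infty}$ into $L^\infty$ on an $(n-1)$-dimensional manifold.
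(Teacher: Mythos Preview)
The paper does not contain a proof of this theorem: it is quoted verbatim as a known result from \cite{FUSCO2012616} (Fusco--Gelli--Pisante), and the present note only \emph{uses} the $n=2$ case as a black box in Step~1 and Step~2 of the main existence argument. So there is no ``paper's own proof'' to compare your proposal against.

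That said, your sketch is a faithful outline of the strategy actually carried out in \cite{FUSCO2012616}: one first uses the Fraenkel asymmetry bound \eqref{eq:1} together with the interior cone condition to show that, for $D(E)$ small, $\partial E$ is a Lipschitz radial graph $\{(1+u)\sigma\}$ over a suitable sphere with $\|u\|_{L^\infty}$ small and $\|\nabla_\sigma u\|_{L^\infty}\le C(R)$; then a Fuglede-type expansion gives $D(E)\gtrsim \|u\|_{H^1(\mathbb{S}^{n-1})}^2$; and finally the three regimes in \eqref{eq:3} arise from interpolating $\dot H^1$ against $W^{1,\infty}$ into $L^\infty$ on $\mathbb{S}^{n-1}$, with the logarithm at $n=3$ coming from the endpoint (Brezis--Gallouet/Moser--Trudinger) case. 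You have also correctly identified the genuine technical point, namely that Fuglede's original argument assumes $\|\nabla_\sigma u\|_{L^\infty}$ is small, whereas here the cone condition only gives it bounded, so the expansion of the perimeter has to be redone under this weaker hypothesis.
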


\begin{rem}
We remark that the family $\mathcal{C}_R$ is scale invariant and that a set in $\mathcal{C}_R^1=\{E\in \mathcal{C}_R:|E|=\omega_n\}$ satisfies the cone condition with radius $R$. If $F\in \mathcal{C}_R$, setting $E=(\omega_n^{1/n}/|F|^{1/n})F$, then $E\in \mathcal{C}_R^1$, $D(E)=D(F)$ and $\lambda_\mathcal{H}(E)=\lambda_\mathcal{H}(F)$.
\end{rem}

We denote with $\mathcal K$ the class of closed convex set of $\mathbb R^n$ and set $\mathcal{K}_R^1=\mathcal K \cap \mathcal{C}_R^1$ the class of the convex sets satisfying the interior cone condition with $|E|=\omega_n$.\\
In this note we prove the existence of a set in the class $\mathcal{K}_R^1$ that minimizes the functional $$\mathcal{F}(E)=\frac{D(E)}{\lambda^2_\mathcal{H}(E)}$$ for $R$ small enough.

\section{Main Result}
Before stating the main theorem let us prove some preliminary results.

\begin{lem}
\label{lem:1}
Let $E_n\subseteq \mathbb{R}^2$ be a sequence of sets such that $E_n\in \mathcal{K}_R^1$. If $E_n\xrightarrow{\mathcal{H}} E_0$, then $E_0\in \mathcal{K}^1_R$.
\end{lem}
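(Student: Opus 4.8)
The plan is to verify, for the limit set $E_0$, the three properties defining the class $\mathcal{K}_R^1=\mathcal{K}\cap\mathcal{C}_R^1$: that $E_0$ is closed and convex, that $|E_0|=\omega_2$, and that $E_0$ satisfies the interior cone condition with radius $R$ and aperture $\pi/2$. First I would record the elementary consequences of the convergence. Setting $\varepsilon_n:=d_\mathcal{H}(E_n,E_0)\to 0$, we have $E_n\subset(E_0)_{\varepsilon_n}$ and $E_0\subset(E_n)_{\varepsilon_n}$ for the respective $\varepsilon_n$-neighbourhoods. Replacing $E_0$ by its closure (which does not change $d_\mathcal{H}$) we may assume $E_0$ closed, and since the convex sets $E_n$ have area $\omega_2$ they are uniformly bounded, so $E_0$ is bounded, hence compact. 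Convexity of $E_0$ is then routine: given $a,b\in E_0$ and $t\in[0,1]$, pick $a_n,b_n\in E_n$ with $a_n\to a$, $b_n\to b$; then $ta_n+(1-t)b_n\in E_n$ converges to $ta+(1-t)b$, whose distance to $E_0$ is at most $\varepsilon_n+o(1)\to0$, so $ta+(1-t)b\in E_0$.

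For the area constraint I would invoke continuity of Lebesgue measure along Hausdorff-convergent sequences of convex sets. On one side, for every open $U\supset E_0$ we have $E_n\subset U$ for $n$ large, so $\limsup_n|E_n|\le\inf_{U\supset E_0}|U|=|E_0|$ by outer regularity. On the other side, every $q\in\operatorname{int}(E_0)$ lies in $E_n$ for $n$ large: choosing $\sigma>0$ with $\overline{B_\sigma(q)}\subset E_0$, if $q\notin E_n$ a separating unit vector $w_n$ together with $q+\sigma w_n\in E_0$ would force the support functions $h_E(u)=\sup_{z\in E}\langle z,u\rangle$ to satisfy $h_{E_0}(w_n)-h_{E_n}(w_n)\ge\sigma$, contradicting $\|h_{E_0}-h_{E_n}\|_\infty=d_\mathcal{H}(E_n,E_0)\to0$; by compactness this inclusion is uniform on compact subsets of $\operatorname{int}(E_0)$, whence $\liminf_n|E_n|\ge|\operatorname{int}(E_0)|=|E_0|$, the last equality because $\partial E_0$ is Lebesgue null. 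Therefore $|E_0|=\lim_n|E_n|=\omega_2$, and in particular $\operatorname{int}(E_0)\neq\emptyset$, so $E_0$ is a genuine convex body.

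The core of the argument is the persistence of the cone condition, which I expect to be the main obstacle. Fix $x\in\partial E_0$ and $p\in\operatorname{int}(E_0)$; after a translation assume $p=0$, so that (again by the interior sub-fact of the previous paragraph) $0\in\operatorname{int}(E_n)$ for $n$ large and the radial functions $\mu_n(v)=\sup\{t\ge0:tv\in E_n\}$ are well defined and bounded away from $0$ and $\infty$. Writing $v=x/|x|$, two one-sided estimates give $\mu_n(v)\to\mu_0(v)=|x|$: interior points of $E_0$ are eventually inside $E_n$, so $\liminf_n\mu_n(v)\ge\mu_0(v)$; and if $t>\mu_0(v)$ then $\dist(tv,E_0)=\delta>0$, so $tv\notin(E_0)_{\varepsilon_n}\supset E_n$ once $\varepsilon_n<\delta$, giving $\limsup_n\mu_n(v)\le\mu_0(v)$. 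Hence the boundary points $x_n:=\mu_n(v)\,v\in\partial E_n$ satisfy $x_n\to x$. By the cone condition for $E_n\in\mathcal{C}_R^1$ there are $\nu_n\in\mathbb{S}^1$ with $S^{\pi/2,R}_{x_n,\nu_n}\subset E_n$, and by compactness of $\mathbb{S}^1$ I may pass to a subsequence with $\nu_n\to\nu$.

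It then remains to prove $S^{\pi/2,R}_{x,\nu}\subset E_0$. The point is that the defining conditions $|y-x|<R$ and $\langle y-x,\nu\rangle>\cos(\pi/4)|y-x|$ of the open sector are strict, hence stable under the perturbations $x_n\to x$ and $\nu_n\to\nu$: for any $y\in S^{\pi/2,R}_{x,\nu}$ one has $y\in S^{\pi/2,R}_{x_n,\nu_n}\subset E_n$ for all $n$ large, whence $\dist(y,E_0)\le d_\mathcal{H}(E_n,E_0)\to0$ and, $E_0$ being closed, $y\in E_0$. Thus $S^{\pi/2,R}_{x,\nu}\subset E_0$, which is exactly the cone condition at $x$; since $x\in\partial E_0$ was arbitrary, $E_0\in\mathcal{C}_R^1$, and together with convexity this yields $E_0\in\mathcal{K}_R^1$. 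The delicate point is the construction of the boundary points $x_n\to x$: a naive choice via supporting lines can fail when the supporting line of $E_0$ at $x$ meets $\partial E_0$ in a segment, which is precisely why I route the argument through the radial function, whose convergence is insensitive to this flat-boundary phenomenon.
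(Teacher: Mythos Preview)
Your proof is correct, and its overall architecture matches the paper's: convexity via approximating segments, then the cone condition by producing boundary points $x_n\in\partial E_n$ converging to a given $x\in\partial E_0$, extracting a limit direction $\nu$ by compactness of $\mathbb{S}^1$, and showing $S^{\pi/2,R}_{x,\nu}\subset E_0$. The technical implementations differ in two places. To construct the approximants $x_n$, the paper first argues that $\partial E_0$ is contained in the Hausdorff limit of the $\partial E_n$ via a case split; you instead use the radial function from an interior point, which is cleaner and, as you note, insensitive to flat boundary pieces. To pass the sector to the limit, the paper invokes $L^1$ convergence $\overline{S}_{x_n,\nu_n}\to\overline{S}_{x_0,\overline{\nu}}$ together with a contradiction argument, whereas you observe directly that the strict defining inequalities of the open sector are stable under small perturbations of vertex and axis. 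You also supply the verification $|E_0|=\omega_2$ via continuity of volume along Hausdorff-convergent convex bodies, a point the paper's proof does not address explicitly. One minor quibble: fixed area $\omega_2$ alone does not force \emph{uniform} boundedness of convex sets (think of thin rectangles); but each $E_n$ is bounded, and $E_0\subset(E_1)_{\varepsilon_1}$ already gives $E_0$ bounded, after which $E_n\subset(E_0)_{\varepsilon_n}$ yields the uniform bound, so your conclusion stands.
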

\begin{proof}
The convexity of $E_0$ is a consequence of the Hausdorff convergence and it is well known. We give a proof for the convenience of the reader. Let be $x_1$, $x_2\in E_0$. Since $x_1\in E_0$ then exists a sequence $x_n^1$ such that $x_n^1\in E_n$ for all $n$ and $x_n^1\rightarrow x_1$. Similarly exists a sequence $x_n^2$ such that $x_n^2\in E_n$ for all $n$ and $x_n^2\rightarrow x_2$. By choosing $n$ sufficiently large we obtain that simultaneously $x_n^1$ and $x_n^2$ belong to the same $E_n$. By the convexity of $E_n$ we obtain that the segment of extremes $x_n^1$ and $x_n^2$ is contained in $E_n$. Therefore for all $t\in [0,1]$ it results that $x_n^1+t(x_n^2-x_n^1)\in E_n$. By the hypothesis of $E_n\xrightarrow{\mathcal{H}} E_0$ we obtain that $x_1+t(x_2-x_1)\in E_0$ for all $t\in [0,1]$ that is to say that $E_0\in \mathcal{K}$.

In order to prove that $E_0\in \mathcal{C}_R^1$ we need some premises. Let $G$ be a set such that $\partial E_n\xrightarrow{\mathcal{H}} G$. We shall prove that $\partial E_0\subset G$. By contraddiction we suppose that exists $x_0\in \partial E_0$ such that $x_0\notin G$. Therefore exists $\varepsilon>0$ such that $$B(x_0,\varepsilon)\cap \partial E_n=\emptyset\qquad \forall\ n\in \mathbb{N}.$$
Given $E\subseteq\mathbb{R}^n$ we will denote with 
$\mathring{E}$ the interior of $E$ and with $E^c$ its complement. There are two possibilities: 
\begin{enumerate}
\item[1)] $B(x_0,\varepsilon)\subset \mathring{E}_n\qquad \forall n\in \mathbb{N}$;
\item[2)] $B(x_0,\varepsilon)\subset E_n^c\qquad \forall n\in \mathbb{N}$.
\end{enumerate} 
\begin{enumerate}
\item[1)] In the first case we have $B(x_0,\varepsilon/2)\subset \mathring{E}_0$ and so $x_0\in \mathring{E}$, contrary to $x_0\in \partial E_0$.
\item[2)] In the second case, since $x_0\in \partial E_0$ and $E_n\xrightarrow{\mathcal{H}} E_0$, then exists $\overline{x}\in B(x_0,\varepsilon)\cap E_0$ and therefore exists $x_n\in E_n$ for all $n\in \mathbb{N}$ such that $x_n\rightarrow \overline{x}$ contrary $B(x_0,\varepsilon)\subset E_n^c$ for all $n\in \mathbb{N}$.
\end{enumerate}
Therefore $\partial E_0\subset G$.
 
We only need to prove that $E_0\in \mathcal{C}_R^1$. Let be $x_0\in \partial E_0$. Then exists, for all $n\in \mathbb{N}$, $x_n\in \partial E_n$ such that $x_n\rightarrow x_0$. Since $E_n\in \mathcal{C}_R^1$, then exists $S_{x_n,\nu_n}\subset E_n$. Let $\overline{\nu}\in \mathbb{S}^{n-1}$ such that $\nu_n\rightarrow \overline{\nu}$. We claim that $S_{x_0,\overline{\nu}}\subset E_0$. Indeed 
\begin{equation}
\label{eq:6}
\overline{S}_{x_n,\nu_n}\xrightarrow{L^1} \overline{S}_{x_0,\overline{\nu}}.
\end{equation}
If by contradiction we suppose that $S_{x_0,\overline{\nu}}\cap E_0^c\neq \emptyset $ then exists $\tilde{x}\in S_{x_0,\overline{\nu}}\cap E_0^c$ and so exists $B(\tilde{x},\varepsilon)\subset S_{x_0,\overline{\nu}}\cap E_0^c$ but $E_n\xrightarrow{\mathcal{H}} E_0$ and then $B(\tilde{x},\varepsilon)\subset S_{x_0,\overline{\nu}}\cap E_n^c$ for all $n\in \mathbb{N}$ which contradicts \eqref{eq:6} and $S_{x_n,\nu_n}\subset E_n$.
\end{proof}

\begin{lem}
\label{lem:2}
The deviation $\lambda_\mathcal{H}$ from the spherical shape of a set $E\subseteq \mathbb{R}^n$ is upper semi-continuous with respect to the Hausdorff convergence.
\end{lem}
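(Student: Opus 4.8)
The plan is to prove the equivalent statement that whenever $E_n\xrightarrow{\mathcal{H}}E_0$ one has $\limsup_{n\to\infty}\lambda_\mathcal{H}(E_n)\le\lambda_\mathcal{H}(E_0)$. Throughout I write $r_n=(|E_n|/\omega_n)^{1/n}$ and $r_0=(|E_0|/\omega_n)^{1/n}$ for the radii fixed by the volume constraint in the definition of $\lambda_\mathcal{H}$, so that $\lambda_\mathcal{H}(E)=r^{-1}\min_z d_\mathcal{H}(E,B_r(z))$, the minimum being taken only over the centre $z$.

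The core of the argument is a comparison in which the optimal centre for the \emph{limit} set is used as a competitor for each term of the sequence. Let $z_0$ realise (or, if the minimum were not attained, nearly realise up to an arbitrarily small error) the minimum defining $\lambda_\mathcal{H}(E_0)$; such a point exists since $z\mapsto d_\mathcal{H}(E_0,B_{r_0}(z))$ is continuous and coercive. Because $z_0$ is admissible in the minimisation defining $\lambda_\mathcal{H}(E_n)$, we have $r_n\lambda_\mathcal{H}(E_n)\le d_\mathcal{H}(E_n,B_{r_n}(z_0))$, and the triangle inequality for the Hausdorff distance then yields
\[
r_n\lambda_\mathcal{H}(E_n)\le d_\mathcal{H}(E_n,E_0)+d_\mathcal{H}(E_0,B_{r_0}(z_0))+d_\mathcal{H}(B_{r_0}(z_0),B_{r_n}(z_0)).
\]
Here the first term tends to $0$ by hypothesis, the second equals $r_0\lambda_\mathcal{H}(E_0)$ by the choice of $z_0$, and the third equals $|r_n-r_0|$, since the Hausdorff distance between two concentric balls is the difference of their radii.

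Dividing by $r_n$ and passing to the $\limsup$ gives the claim, \emph{provided} $r_n\to r_0$ with $r_0>0$: under these conditions the first and third contributions vanish and the factor $r_0/r_n$ in front of $\lambda_\mathcal{H}(E_0)$ tends to $1$. Thus the only genuinely delicate point is the convergence of the normalising radii $r_n\to r_0$, equivalently $|E_n|\to|E_0|$, which is where Hausdorff convergence alone is not enough for arbitrary sets. In the situation of interest this obstacle disappears, because every set of $\mathcal{K}_R^1$ has measure $\omega_n$, so that $r_n=r_0=1$ and the estimate above reduces to the pure triangle inequality. For the statement in full generality I would instead invoke the additional structure at hand—convexity of the sets, for which Hausdorff convergence is well known to imply convergence of the Lebesgue measure, or the interior cone condition built into $\mathcal{C}_R$, which supplies a uniform lower density bound upgrading Hausdorff convergence to volume convergence. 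Once $r_n\to r_0$ is secured, the inequality gives $\limsup_{n\to\infty}\lambda_\mathcal{H}(E_n)\le\lambda_\mathcal{H}(E_0)$, which is the desired upper semi-continuity.
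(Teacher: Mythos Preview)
Your proof is correct and follows essentially the same strategy as the paper: use the optimal ball $B_{r_0}(z_0)$ for $E_0$ as a competitor in the minimisation defining $\lambda_\mathcal{H}(E_n)$, then apply the triangle inequality for $d_\mathcal{H}$ and pass to the $\limsup$. You are in fact more careful than the paper, which silently works with $|B|=\pi$ (i.e.\ $r_n=r_0=1$) without commenting on the radius issue; your extra term $|r_n-r_0|$ and the discussion of when $r_n\to r_0$ make explicit that the lemma, as stated for arbitrary sets, really requires the fixed-volume normalisation of $\mathcal{K}_R^1$ (or some further structure ensuring volume convergence).
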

\begin{proof}
Let $E_0\in \mathbb{R}^n$ and $E_n\subseteq\mathbb{R}^n$ be a sequence such that $E_n\xrightarrow{\mathcal{H}} E_0$. Therefore we can write  $$d_\mathcal{H}(E_n,E_0)\rightarrow 0.$$
We denote by $B_n$ any ball such that $$\lambda_\mathcal{H}(E_n)=d_\mathcal{H}(E_n,B_n)=\inf_{|B|=\pi}d_\mathcal{H}(E_n,B).$$ Similarly, we denote by $B_0$ a ball such that $$\lambda_\mathcal{H}(E_0)=d_\mathcal{H}(E_0,B_0)=\inf_{|B|=\pi}d_\mathcal{H}(E_0,B).$$

Using the definition of $\lambda_\mathcal{H}$ and the triangle inequality, we obtain that 
$$\lambda_\mathcal{H}(E_n)\leq d_\mathcal{H}(E_n,B_0)\leq d_\mathcal{H}(E_0,B_0)+d_\mathcal{H}(E_n,E_0)$$ $$=d_\mathcal{H}(E_n,E_0)+\lambda_\mathcal{H}(E_0)\qquad\qquad \forall\ n\in \mathbb{N}.$$
Therefore for all $n\in \mathbb{N}$, it results
\begin{equation}
\label{eq:7}
\lambda_\mathcal{H}(E_n)\leq d_\mathcal{H}(E_n,E_0)+\lambda_\mathcal{H}(E_0).
\end{equation}
Passing to the limit as $n\to \infty$, we have $$\limsup_{n} \lambda_\mathcal{H}(E_n)\leq \lambda_\mathcal{H}(E_0),$$ that is, $\lambda_\mathcal{H}$ is upper semi-continuous.
\end{proof}

Now we are ready to prove the following theorem:
\begin{thm}
There exists $\overline{R}>0$ such that for all $R<\overline{R}$ there exists a set $E_0\in \mathcal{K}^1_R$ which minimizes the shape functional $$\mathcal{F}(E)=\frac{D(E)}{\lambda_\mathcal{H}^2(E)}$$ among all the subsets of $\mathbb{R}^2$ belonging to $\mathcal{K}_R^1$ (the ball excluded).
\end{thm}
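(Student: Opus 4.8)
The plan is to run the direct method of the calculus of variations in the class $\mathcal{K}_R^1$, using Hausdorff convergence as the working topology; the closedness of the class and the semicontinuity of $\lambda_\mathcal{H}$ are already in hand from Lemma~\ref{lem:1} and Lemma~\ref{lem:2}, so the only genuine difficulty will be to keep a minimizing sequence away from the ball.

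First I would fix a minimizing sequence $(E_k)\subset\mathcal{K}_R^1$ of non-balls with $\mathcal{F}(E_k)\to m:=\inf\mathcal{F}$ and $|E_k|=\pi$. Compactness comes from the cone condition: a convex body of area $\pi$ that contains a sector of aperture $\pi/2$ and radius $R$ at every boundary point cannot be arbitrarily thin, so its minimal width is bounded below by a fixed multiple of $R$; inscribing a triangle on a diameter then yields $\pi=|E_k|\ge c\,R\,\operatorname{diam}(E_k)$, hence a uniform diameter bound. After a translation putting each $E_k$ into a fixed ball, Blaschke's selection theorem gives a subsequence with $E_k\xrightarrow{\mathcal{H}}E_0$, $E_0$ convex. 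By Lemma~\ref{lem:1}, $E_0\in\mathcal{K}_R^1$; and since the cone condition keeps the inradius bounded below (no degeneration to a segment), area is continuous along the convergence, so $|E_0|=\pi$.

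Next I would establish lower semicontinuity of $\mathcal{F}$ at $E_0$, \emph{assuming $E_0$ is not a ball}. On convex bodies the perimeter is continuous for the Hausdorff metric, whence $D(E_k)\to D(E_0)$; combined with $\limsup_k\lambda_\mathcal{H}(E_k)\le\lambda_\mathcal{H}(E_0)$ from Lemma~\ref{lem:2} and with $\lambda_\mathcal{H}(E_0)>0$, this gives
\[
\liminf_{k}\mathcal{F}(E_k)=\liminf_{k}\frac{D(E_k)}{\lambda_\mathcal{H}^2(E_k)}\ge\frac{\lim_{k}D(E_k)}{\limsup_{k}\lambda_\mathcal{H}^2(E_k)}\ge\frac{D(E_0)}{\lambda_\mathcal{H}^2(E_0)}=\mathcal{F}(E_0),
\]
so $\mathcal{F}(E_0)\le m$; as $E_0$ is admissible and not a ball, it is a minimizer.

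The heart of the proof, and the step I expect to be the main obstacle, is to rule out $E_0$ being a ball, where $\mathcal{F}$ is a $0/0$ indeterminacy and the estimate above fails. Here I would combine two facts. On one hand, Theorem~\ref{teo_1.1} (case $n=2$) gives $\lambda_\mathcal{H}(E)\le C\,D(E)^{1/2}$ whenever $D(E)<\delta_R$, hence a positive floor $\mathcal{F}(E)\ge 1/C^2$ for every set close to the ball. On the other hand, $\mathcal{F}$ can be made small away from the ball: a genuine ellipse of area $\pi$ and semi-axes $a\ge b$, $ab=1$, satisfies $\mathcal{F}\to\tfrac34$ as $a\to1^{+}$ but $\mathcal{F}\to0$ as $a\to\infty$, so a sufficiently elongated ellipse $E^{\ast}$ has $\mathcal{F}(E^{\ast})$ below that floor. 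Such an $E^{\ast}$ is thin, hence satisfies the cone condition only for small radius; setting $\overline R$ equal to its intrinsic cone radius, for every $R<\overline R$ we obtain $E^{\ast}\in\mathcal{K}_R^1$ and therefore $m\le\mathcal{F}(E^{\ast})<1/C^2$. Consequently any minimizing sequence must satisfy $D(E_k)\ge\delta_R$ eventually, i.e.\ it stays uniformly away from the ball, so by continuity of $D$ the limit obeys $D(E_0)>0$ and $E_0$ is not a ball, and the previous two paragraphs deliver the minimizer. The delicate point — and the precise place where the smallness of $R$ enters — is to balance the competitor's asymmetry against the near-ball floor of Theorem~\ref{teo_1.1} while keeping the competitor admissible.
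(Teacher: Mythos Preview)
Your overall strategy matches the paper's: direct method in the Hausdorff topology, compactness from a diameter bound, closedness of $\mathcal{K}_R^1$ via Lemma~\ref{lem:1}, semicontinuity via Lemma~\ref{lem:2}, and ruling out the ball by exhibiting a thin competitor whose $\mathcal{F}$-value undercuts the near-ball floor. The choice of an ellipse rather than the paper's rectangle is immaterial.

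There is, however, a genuine gap in the step where you rule out the ball. The constant $C$ in Theorem~\ref{teo_1.1} depends on $R$; as $R\to 0$ the class $\mathcal{C}_R$ grows, and there is no reason for the floor $1/C(R)^2$ to stay bounded away from zero. Your elongated ellipse $E^\ast$ does have $\mathcal{F}(E^\ast)\to 0$, but its admissibility forces $R\to 0$ as well, so the desired inequality $\mathcal{F}(E^\ast)<1/C(R)^2$ is a race between two quantities that may both tend to zero, and it cannot be decided without quantitative control on $C(R)$ that Theorem~\ref{teo_1.1} does not provide. You flag exactly this in your last sentence, but you do not resolve it; as written the argument is circular, because the floor you are trying to beat moves with the very $R$ you must shrink to make the competitor admissible.

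The paper sidesteps this by replacing Theorem~\ref{teo_1.1} at this point with Fuglede's stability theorem for convex (nearly spherical) sets, which gives a floor $C_F>0$ depending only on the dimension. If the minimizing sequence converged to a ball, the $E_n$ would eventually be nearly spherical and hence satisfy $\mathcal{F}(E_n)\ge C_F$; since $C_F$ is fixed once and for all, one then picks a single thin competitor with $\mathcal{F}<C_F$ and sets $\overline{R}$ equal to its cone radius. This decouples the competitor from the floor and the contradiction goes through cleanly.
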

\begin{proof} The proof is divided into two steps.

\textbf{Step 1.} In this step we prove that the functional $\mathcal{F}$ is bounded from below away from zero.

For $n=2$ Theorem \ref{teo_1.1} states that for any $R>0$ there exist $0<\delta_R<1$ and a costant $C_R$ depending only on $R$ and $n$ such that $$\frac{D(E)}{\lambda^2_\mathcal{H}(E)}\geq C_R,\qquad  \textit{if}\ \ D(E)\leq \delta_R.$$
If $E\in \mathcal{K}_R^1$ is such that $D(E)\geq\delta_R$, then $$\frac{D(E)}{\lambda^2_\mathcal{H}(E)}\geq \frac{\delta_R}{\lambda^2_\mathcal{H}(E)}.$$ To obtain an estimate from below of $\delta_R/\lambda_\mathcal{H}^2$ we only need to obtain an estimate from above of $\lambda_\mathcal{H}$. Considering $x_1,x_2\in E$, by the convexity of $E$ we have that $\overline{x_1x_2}\subset E$. We can now consider the cones with vertex in $x_1$ and in $x_2$ and radius $R$. If we consider in these two cones two balls with center $x_1^c$ and $x_2^c$ rispectively and radius $\bar{R}\ll R$, it results $$\bar{R}\leq d(x_1,x_1^c)\leq R-\bar{R}$$ $$\bar{R}\leq d(x_2,x_2^c)\leq R-\bar{R}$$
and so the area of the rectangle with basis $d(x_1^c,x_2^c)$ and height $2\bar{R}$ is such that $$d(x_1^c,x_2^c)\leq \frac{\pi}{2\bar{R}}.$$ 
By the triangle inequality, it holds $$d(x_1,x_2)\leq d(x_1,x_1^c)+d(x_1^c,x_2^c)+d(x_2^c,x_2)\leq 2R-2\bar{R}+\frac{\pi}{2\bar{R}}=H_R.$$
We observe that if $E\cap B\neq \emptyset$, where we denote by $B$ an unit ball, then $$d_\mathcal{H}(E,B)\leq \max\{diam (E),diam (B)\}.$$ By the isodiametric inequality (see \cite[Theorem 3.11]{maggi_2012}), it holds $$diam(B)\leq diam (E)$$ and so
\begin{equation}
\label{eq:4}
\lambda_\mathcal{H}(E)=d_\mathcal{H}(E,B_1(x_\infty))\leq diam(E)\leq H_R,
\end{equation}
where we denote by $B_1(x_\infty)$ any ball such that $\lambda_\mathcal{H}(E)=d_\mathcal{H}(E,B_1(x_\infty)).$ 
Consequentely
$$\frac{D(E)}{\lambda^2_\mathcal{H}(E)}\geq \frac{\delta_R}{H_R^2}.$$
As a consequence we have for every $R>0$
$$\frac{D(E)}{\lambda^2_\mathcal{H}(E)}\geq \overline{C}_R=\min \left\{C_R,\frac{\delta_R}{H^2_R}\right\}\qquad \forall\ E\in \mathcal{K}_R^1.$$
\textbf{Step 2.} Now we prove the existence of an optimal set in the class $\mathcal{K}_R^1$ for every $R>0$.
Let $E_n$ be a minimizing sequence in the class $\mathcal{K}_R^1$, then 
\begin{equation}
\label{eq:2}
\inf_{E\in \mathcal{K}_R^1} \mathcal{F}(E)= \lim_n \frac{D(E_n)}{\lambda_\mathcal{H}^2(E_n)}\leq C.
\end{equation}
We observe that exists $C>0$ such that $\lambda_\mathcal{H}(E_n)\leq C$. In fact if $D(E_n)\leq \delta_R$, then by Theorem \ref{teo_1.1} $\lambda_\mathcal{H}^2(E_n)\leq C$.

If $D(E_n)\geq \delta_R$ then, by \eqref{eq:4}, $\lambda_\mathcal{H}(E_n)$ is uniformly bounded.

Therefore by \eqref{eq:2} there exists a constant $\overline{C}$ such that $D(E_n)\leq \overline{C}$ and so $P(E_n)\leq \overline{C}$. By the compactness of the Hausdorff distance on equibounded sets, it exists a set $E_0\subset\mathbb{R}^2$ such that

\begin{equation}
\label{eq:5}
E_n\xrightarrow{\mathcal{H}} E_0.
\end{equation}

By Lemma \ref{lem:1} $E_0\in \mathcal{K}_R^1$.

There are two possibilities
\begin{enumerate}
\item[a)] $\qquad E_0$ is a ball
\item[b)] $\qquad E_0$ is not a ball
\end{enumerate}
\begin{enumerate}
\item[a)] In this case we can apply the Fuglede's results to the set $E_0$. More precisely it was proved in \cite[Theorem 2.3]{article} the existence of a positive constant $C_F$ which depends only on $n$ such that $$\frac{D(E_0)}{\lambda_\mathcal{H}^2(E_0)}\geq C_F>0.$$ 

Then exists $n_0\in \mathbb{N}$ such that for all $n>n_0$ it holds $$\frac{D(E_n)}{\lambda_\mathcal{H}^2(E_n)}\geq C_F>0.$$

Since \eqref{eq:5} holds with $E_0=B$, it results that $\lambda_\mathcal{H}(E_n)\rightarrow 0$ and by \eqref{eq:2} it results that $ D(E_n)\rightarrow 0$.

To exclude the case $a)$ we claim that exists a set $\tilde{E}\in \mathcal{K}_R^1$ such that $$\frac{D(\tilde{E})}{\lambda_\mathcal{H}^2(\tilde{E})}<C_F.$$
To this aim we consider a rectangle $Q_R$ with dimensions $2L>2l$ such that $2L2l=\pi$. This set belongs to $\mathcal{K}_R^1$ with $R=2l<2$. To compute $\lambda_\mathcal{H}$ we compute the distance between one vertex $A$ to the unit ball $B$ centered in the center of the rectangle $$d(A,B)=\sqrt{L^2+l^2}-1=\sqrt{\frac{\pi^2}{16l^2}+l^2}-1,$$ 
$$\frac{D(Q_R)}{\lambda_\mathcal{H}^2(Q_R)}=\frac{(4(l+L)-2\pi)/2\pi}{\max^2\left\{1-l,\sqrt{\frac{\pi^2}{16^2}+l^2}-1\right\}}$$ $$=\frac{2\left(\frac{\pi}{4l}+l\right)-\pi}{\pi\max^2\left\{1-l,\sqrt{\frac{\pi^2}{16^2}+l^2}-1\right\}}=a(l).$$
We observe that $a(l)$ is strictly increasing in $(0,1)$ and in this case there exists $R_0<2$ such that for $R<R_0$ the rectangle $Q_R\in \mathcal{K}_R^1$ satisfies $$\frac{D(Q_R)}{\lambda_\mathcal{H}^2(Q_R)}<C_F.$$
\item[b)] In this other case we consider that the limit $E_0$ is not a ball. 
By the upper semi-continuity of $\lambda_\mathcal{H}$ (Lemma \ref{lem:2}) and by the lower semicontinuity of the perimeter we obtain that $$\frac{D(E_0)}{\lambda_\mathcal{H}^2(E_0)}\leq \liminf_{n}\frac{D(E_n)}{\lambda_\mathcal{H}^2(E_n)}.$$
By \eqref{eq:7} it results
$$\frac{D(E_n)}{\lambda_\mathcal{H}^2(E_n)}\geq \frac{D(E_n)}{(d_\mathcal{H}(E_n,E_0)+\lambda_\mathcal{H}(E_0))^2}$$ and passing to the limit we obtain that
$$\inf_{E\in \mathcal{K}^1_R}\frac{D(E)}{\lambda_\mathcal{H}^2(E)}=\lim_n \frac{D(E_n)}{\lambda_\mathcal{H}^2(E_n)}\geq \liminf_n \frac{D(E_n)}{(d_\mathcal{H}(E_n,E_0)+\lambda_\mathcal{H}(E_0))^2}$$ $$\geq \frac{D(E_0)}{\lambda_\mathcal{H}^2(E_0)}\geq \inf_{E\in \mathcal{K}^1_R}\frac{D(E)}{\lambda_\mathcal{H}^2(E)}.$$

 Therefore $$\frac{D(E_0)}{\lambda_\mathcal{H}^2(E_0)}= \inf_{E\in \mathcal{K}^1_R}\frac{D(E)}{\lambda_\mathcal{H}^2(E)}$$ and so $E_0$ minimizes the functional $\mathcal{F}(E)$.

\end{enumerate}

\end{proof}

\bibliographystyle{plain}

\begin{thebibliography}{1}

\bibitem{Bernstein1905}
Felix Bernstein.
\newblock \"Uber die isoperimetrische eigenschaft des kreises auf der
  kugeloberfläche und in der ebene.
\newblock {\em Mathematische Annalen}, 60:117--136, 1905.

\bibitem{bianchini:hal-01181104}
Chiara Bianchini, Gisella Croce, and Antoine Henrot.
\newblock {On the quantitative isoperimetric inequality in the plane}.
\newblock {\em {ESAIM: Control, Optimisation and Calculus of Variations}},
  23(2):517--549, 2017.

\bibitem{Bonnesen1924}
T.~Bonnesen.
\newblock Über das isoperimetrische defizit ebener figuren.
\newblock {\em Mathematische Annalen}, 91:252--268, 1924.

\bibitem{article}
Bent Fuglede.
\newblock Stability in the isoperimetric problem for convex or nearly spherical
  domains in r n.
\newblock {\em Transactions of the American Mathematical Society}, 314, 08
  1989.

\bibitem{10.2307/40345432}
N.~Fusco, F.~Maggi, and A.~Pratelli.
\newblock The sharp quantitative isoperimetric inequality.
\newblock {\em Annals of Mathematics}, 168(3):941--980, 2008.

\bibitem{FUSCO2012616}
Nicola Fusco, Maria~Stella Gelli, and Giovanni Pisante.
\newblock On a {Bonnesen} type inequality involving the spherical deviation.
\newblock {\em Journal de Mathématiques Pures et Appliquées}, 98(6):616--632,
  2012.

\bibitem{zbMATH02662735}
A.~Hurwitz.
\newblock On the isoperimetric problem.
\newblock {\em C. R. Acad. Sci., Paris}, 132:401--403, 1901.

\bibitem{maggi_2012}
Francesco Maggi.
\newblock {\em Sets of Finite Perimeter and Geometric Variational Problems: An
  Introduction to Geometric Measure Theory}.
\newblock Cambridge Studies in Advanced Mathematics. Cambridge University
  Press, 2012.

\end{thebibliography}

\end{document}